\newcommand{\R}{\mathbb{R}}
\newcommand{\calA}{\mathcal{A}}
\newcommand{\calH}{\mathcal{H}}
\newcommand{\calF}{\mathcal{F}}
\newcommand{\bal}{\bar{\al}}
\newcommand{\bL}{\bar{L}}
\newcommand{\bR}{\bar{R}}
\newcommand{\bY}{\bar{Y}}
\newcommand{\ome}{\omega}
\newcommand{\al}{\alpha}
\newcommand{\bW}{\overline{W}}
\newcommand{\ip}[2]{\ensuremath{\langle #1,#2\rangle}}
\newcommand{\didelta}{\pi/\sqrt{\delta}}
\newcommand{\Didelta}{\frac{\pi}{\sqrt{\delta}}}
\DeclareMathOperator{\conju}{conj}
\DeclareMathOperator{\foc}{foc}
\DeclareMathOperator{\ind}{ind}
\DeclareMathOperator{\Jac}{Jac}
\DeclareMathOperator{\Sym}{Sym}
\newtheorem{thm}{Theorem}[section]
\newtheorem{lem}[thm]{Lemma}
\newtheorem{prop}[thm]{Proposition}
\newtheorem{cor}[thm]{Corollary}
\newtheorem{defi}[thm]{Definition}
\newtheorem{theorem}{Theorem}
\theoremstyle{definition}
\newenvironment{conj}{
\noindent\textsc{\textbf{Conjecture:}}
}
\begin{document}

\author{David Gonz\'alez-\'Alvaro}
\address{ Department of Mathematics, Universidad Aut\'onoma de Madrid, and ICMAT CSIC-UAM-UCM-UC3M}
\curraddr{}
\email{dav.gonzalez@uam.es}

\author{Luis Guijarro}
\address{ Department of Mathematics, Universidad Aut\'onoma de Madrid, and ICMAT CSIC-UAM-UCM-UC3M}
\curraddr{}
\email{luis.guijarro@uam.es}
\thanks{Both authors were supported by research grants  MTM2011-22612 from the Ministerio de Ciencia e Innovaci\'on (MCINN) and MINECO: ICMAT Severo Ochoa project SEV-2011-0087; the first author was also suppported by FPI grant BES-2012-053704.}

\thanks{}

\title[Restrictions on positively curved submersions]{Soft restrictions on positively curved Riemannian submersions}

\subjclass[2000]{53C20} 

\begin{abstract} 
We bound the dimension of the fiber of a Riemannian submersion from a positively curved manifold in terms of the dimension of the base of the submersion and either its conjugate radius or the length of its shortest closed geodesic.
\end{abstract}

\maketitle

\section{Introduction and statement of results}

The main difficulty when studying positively curved manifolds is the small number of known  examples. New ones appear in increasing periods of time, and at the present state of knowledge,  Riemannian submersions are necessary in their construction: starting with the correct manifold with nonnegative sectional curvature as total space, one searches for some submersion that would guarantee a positively curved base thanks to the well-known O'Neill's formula. However, this is not so easily done, pointing out to the possible presence of restrictions on the existence of such Riemannian submersions from an arbitrary nonnegatively curved manifold.

In this note, we consider the case of positively curved domains. The following conjecture (attributed to F. Wilhelm) is of particular interest: 

\medskip

\begin{conj}
Let $\pi:M^{n+k}\to B^n$ be a Riemannian submersion between compact positively curved Riemannian manifolds. Then $k\leq n-1$.
\end{conj}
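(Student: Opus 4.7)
The plan is to proceed by contradiction: assume $k\geq n$ and extract an impossibility from Frankel-type considerations. Fix two distinct basepoints $b_1,b_2\in B$, and let $F_i=\pi^{-1}(b_i)$. These are disjoint compact $k$-dimensional submanifolds of $M^{n+k}$, and they satisfy $\dim F_1+\dim F_2=2k\geq n+k=\dim M$. This is precisely the dimensional regime in which Frankel's theorem applies to totally geodesic submanifolds in positively curved manifolds; hence if $\pi$ had totally geodesic fibers (equivalently, if the O'Neill $T$-tensor vanished), the two fibers would be forced to intersect, which is absurd, and the bound $k\leq n-1$ would follow at once.

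For arbitrary fibers, Frankel's argument must be generalized. First I would minimize $d(x_1,x_2)$ over $(x_1,x_2)\in F_1\times F_2$: a minimizer produces a horizontal geodesic $\gamma$ meeting each fiber perpendicularly. Next I would compute the second variation of arc length of $\gamma$ along a space of variations tangent to $F_1$ at one endpoint and to $F_2$ at the other, as in Frankel. Because the fibers are not totally geodesic, the second variation picks up boundary contributions involving $T$. The crucial step is to show that the strictly positive curvature of $M$ along $\gamma$ dominates these boundary terms on a space of variations of dimension $\dim F_1+\dim F_2-\dim M+1=k-n+1\geq 1$, producing a variation that strictly decreases the length of $\gamma$. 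This contradicts the choice of $\gamma$ as a realizer of $d(F_1,F_2)$, forcing $F_1\cap F_2\neq\emptyset$ and the required contradiction.

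The main tools I would combine for this control are O'Neill's formula $K_B(\pi_*X,\pi_*Y)=K_M(X,Y)+3\|A_X Y\|^2$ together with the Gray--O'Neill identities relating $A$ and $T$, and Wilking's theorem that in positive curvature the dual foliation of $\pi$ consists of the single leaf $M$. Heuristically, the first input bounds the $A$-tensor in terms of the curvatures of $M$ and $B$, while the second forbids $T$ from compensating the curvature gain in every horizontal direction simultaneously. A refinement would try to couple this with Wilking's connectedness principle along horizontal segments to upgrade the infinitesimal Frankel-type collision into a genuine intersection.

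The main obstacle, and the reason the conjecture remains open, is a quantitative control of $T$ along a horizontal geodesic joining $F_1$ to $F_2$: one cannot rescale it away, and its contribution to the second variation can a priori cancel the gain from $K_M>0$. This is precisely why the present paper settles for \emph{soft} restrictions, trading the sharp bound $k\leq n-1$ for quantitative ones depending on auxiliary geometric data such as the conjugate radius of $B$ or the length of the shortest closed geodesic in $M$.
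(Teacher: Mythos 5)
There is a genuine gap here, and in fact you have named it yourself: the statement you were asked about is precisely the open conjecture of Wilhelm, which the paper does \emph{not} prove. The paper only establishes the weaker, quantitative bounds of Theorems \ref{conj} and \ref{thm:closed geodesic bounds} (via index estimates for Lagrangian subspaces of Jacobi fields along horizontal geodesics and Wilking's transverse equation), explicitly because the sharp bound $k\leq n-1$ is out of reach. Your Frankel-type strategy is the natural first attack, and the dimension count is correct: if the fibers were totally geodesic, parallel fields along a minimizing horizontal geodesic from $F_1$ to $F_2$ that are tangent to both fibers form a space of dimension at least $k-n+1\geq 1$, and the second variation is then strictly negative, contradicting minimality. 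But for a general Riemannian submersion the boundary terms $\langle S_{\gamma'}X,X\rangle$ coming from the second fundamental forms of the fibers enter the index form with no a priori control, and nothing in O'Neill's formula (which constrains $A$, not $T$) or in Wilking's dual-foliation theorem bounds them in terms of the curvature integral along $\gamma$. So the ``crucial step'' of your second paragraph is not a step one can currently carry out; it is the open problem itself.

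To be clear about the comparison: the paper sidesteps this entirely. It never tries to make two fibers collide; instead it fixes one horizontal geodesic $\al$, builds the Lagrangian $L$ spanned by holonomy Jacobi fields and projectable fields vanishing at $t=0$, and plays a lower bound for $\ind_L[0,r\pi]$ (coming from $\sec_M\geq 1$ via a Rauch/Wilking iteration, Proposition \ref{lower Lagrangian bound in long intervals}) against an upper bound for $\ind_{\bL_0}[0,r\pi]$ in the base (coming from the conjugate radius or from a closed geodesic with few conjugate points). The identification $\ind_L=\ind_{\bL_0}$, which uses that holonomy fields never vanish, is what replaces any need to control $T$. If you want to salvage something from your approach, you would need a new quantitative input bounding the fibers' second fundamental forms along horizontal geodesics; absent that, your argument proves the conjecture only in the totally geodesic case, where it is classical.
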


\medskip

The conjecture is similar to the Chern-Kuiper theorem ruling out isometric immersions from compact nonpositively curved manifolds of dimension $n$  into nonpositively curved manifolds of dimension $2n-1$. 
 
Partial progress towards the conjecture appears in the thesis of W. Jim\'enez \cite{BG} where he used results of Kim and Tondeur \cite{KT} to obtain that if $\sec_M\geq 1$ and $\sec_B\leq C$, then 
\begin{equation}\label{Bill}
k\leq \frac{1}{3}\left(C-1\right)\left(n-1\right).
\end{equation}

It is worth noticing that O'Neill's formula together with \cite{Wals} guarantees that $C> 1$, and therefore the right hand side in \eqref{Bill} is positive.

For a different type of restrictions using rational homotopy theory methods, see \cite{AK}.

In this paper, we examine the index of Lagrangian subspaces of Jacobi fields (see section \ref{abstract jacobi} for the definitions) along horizontal geodesics to prove:

\begin{theorem}\label{conj}
Let $M^{n+k}, B^n$ be a compact Riemannian manifolds with $\sec\geq 1$, and 
$\pi:M^{n+k}\to B^n$ a Riemannian submersion. Then 
\[
k\leq \left(\frac{\pi }{\conju(B)}-1\right)(n-1),
\] 
where $\conju(B)$ is the conjugate radius of $B$.
\end{theorem}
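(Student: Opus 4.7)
The plan is to pick a horizontal geodesic $\gamma$ in $M$ and compare the indices of two Lagrangian subspaces of Jacobi fields along $\gamma$: one reflecting $\sec_M \geq 1$ in the total space, the other reflecting the bound on $\conju(B)$ in the base via the projection $\bar\gamma = \pi \circ \gamma$.

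First I would fix a horizontal unit-speed geodesic $\gamma:[0,L]\to M$ with $L$ slightly greater than $\pi$, and let $\bar\gamma=\pi\circ\gamma$, which is a unit-speed geodesic in $B$ with $\sec_B\geq 1$ by O'Neill's formula. Consider the $(n+k-1)$-dimensional Lagrangian subspace $\Lambda$ of normal Jacobi fields along $\gamma$ vanishing at $0$, split as $\Lambda=\mathcal H\oplus\mathcal V$, where $\mathcal H$ (dimension $n-1$) consists of Jacobi fields with $J'(0)\in H_{\gamma(0)}$ and $\mathcal V$ (dimension $k$) of those with $J'(0)\in V_{\gamma(0)}$. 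Fields in $\mathcal H$ come from horizontal geodesic variations and are projectable; the map $J\mapsto d\pi J$ identifies $\mathcal H$ with the space of normal Jacobi fields along $\bar\gamma$ vanishing at $0$.

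Two estimates in opposite directions. By Rauch comparison using $\sec_M\geq 1$, every $J\in\Lambda$ has a zero in $(0,\pi]$, and for generic $L>\pi$ this zero lies in $(0,L)$; a standard counting argument then forces the index of $\Lambda$ on $[0,L]$ (equivalently, the Morse index of $\gamma|_{[0,L]}$) to be at least $n+k-1$. Conversely, for $J\in\mathcal H$ a zero of $J$ at $t$ forces $\bar J(t)=0$, so $\bar\gamma(t)$ is conjugate to $\bar\gamma(0)$ along $\bar\gamma$; the definition of $\conju(B)=c$ forces consecutive such zeros to be at distance at least $c$, giving at most $\lfloor L/c\rfloor$ zeros per Jacobi field, and so the index of $\mathcal H$ on $[0,L]$ is at most $(n-1)\lfloor L/c\rfloor$.

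The main obstacle is to bridge these estimates, which concern different subspaces---the full Lagrangian $\Lambda$ versus its sub-Lagrangian $\mathcal H$---since a priori the Morse index of $\gamma$ in $M$ may exceed that of $\bar\gamma$ in $B$. The resolution, within the abstract Jacobi-field framework of Section \ref{abstract jacobi}, will show that the vertical Jacobi fields in $\mathcal V$ contribute no extra conjugate points of $\Lambda$ in $(0,L)$ beyond those already detected by $\mathcal H$. I expect this to follow from a Wilking-style transversal Jacobi analysis of the vertical component along horizontal geodesics, combined with the rigidity imposed by $\sec_M\geq 1$. Granting this identification, $n+k-1\leq(n-1)\lfloor L/c\rfloor$ for all $L$ slightly greater than $\pi$, and letting $L\to\pi^+$ yields $k\leq(\pi/\conju(B)-1)(n-1)$.
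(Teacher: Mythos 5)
Your two outer estimates are fine, but the ``bridge'' you defer is the whole theorem, and as you state it the bridging claim is false. You assert that the vertical fields in $\mathcal{V}$ contribute no conjugate points of $\Lambda=L_0$ in $(0,L)$ beyond those detected by $\mathcal{H}$, i.e.\ that $\ind_{\Lambda}(0,L)=\ind_{\mathcal{H}}(0,L)$. Test this on the Hopf fibration $S^3\to S^2(1/2)$ along a horizontal geodesic: on the round $S^3$ every Jacobi field vanishing at $0$ is $\sin(t)E(t)$ with $E$ parallel, so \emph{all} of $\Lambda$ (dimension $2$) vanishes again at $t=\pi$, giving $\ind_{\Lambda}(0,\pi]=2$, while $\mathcal{H}$ is one--dimensional and contributes only $1$ there. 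The vertical field genuinely produces an extra conjugate point, so no transverse-equation argument can deliver the identification for this choice of Lagrangian. (A smaller issue: counting zeros of a fixed basis of $\mathcal{H}$ does not directly bound $\ind_{\mathcal{H}}$, which counts $\dim\{J\in\mathcal{H}:J(t)=0\}$ over all $t$; that is what Propositions \ref{Lytchak inequality} and \ref{upper bound for Lagrangians - conj-radius} are for.)

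The paper's resolution is to change the Lagrangian rather than to split $L_0$: it takes $L$ spanned by the \emph{holonomy} Jacobi fields $W$ --- which never vanish, so $\ind_W=0$ --- together with the projectable fields vanishing at $0$. Lytchak's additivity $\ind_L=\ind_{L/W}+\ind_W$, plus the fact that the transverse equation for $W$ is exactly the base Jacobi equation, yields the identity $\ind_L I=\ind_{\bL_0}I$ with no error term; this is precisely what fails for your $\mathcal{V}$. The price is that the lower bound $\ind_L[0,r\pi]\geq r(n+k-1)+(n-1)$ is no longer the elementary Rauch statement you invoke (the fields of $L$ do not all vanish at $0$); it requires Proposition \ref{lower Lagrangian bound in long intervals}, i.e.\ the iterated Wilking--Lytchak machinery for arbitrary Lagrangians. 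Note also that the paper works on $[0,r\pi]$ and lets $r\to\infty$ to amortize the additive constants appearing in Propositions \ref{Lytchak inequality} and \ref{upper bound for Lagrangians - conj-radius}; your single-interval argument leaves no room for such error terms, which is exactly why it forces the false exact bridge. If you want to keep $\Lambda=L_0$, you can salvage the proof by comparing $L_0$ with the paper's $L$ via Proposition \ref{Lytchak inequality} (they share the $(n-1)$--dimensional space of projectable fields vanishing at $0$, so their indices differ by at most $k$ on any interval) and then passing to long intervals.
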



Since the conjugate radius of a manifold with $1\leq \sec\leq C$ is at least $\pi/\sqrt{C}$, Theorem \ref{conj} gives the following improvement of Jimenez's result:

\begin{cor}\label{upper curvature bound theorem}
Under the conditions of Theorem \ref{conj}, if $\sec_B\leq C$, then
\[
k\leq (\sqrt{C}-1)\left(n-1\right)
\]
\end{cor}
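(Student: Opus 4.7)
The plan is to obtain Corollary \ref{upper curvature bound theorem} as a direct consequence of Theorem \ref{conj}; no new geometric input beyond a standard Rauch comparison estimate is required. First I would apply Theorem \ref{conj}, which gives
\[
k \leq \left(\frac{\pi}{\conju(B)} - 1\right)(n-1).
\]

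Next I would bound $\conju(B)$ from below using the classical Rauch comparison theorem. Since O'Neill's formula gives $\sec_B \geq \sec_M \geq 1$, and by hypothesis $\sec_B \leq C$, in particular the sectional curvature on $B$ is bounded above by $C>0$. The standard Rauch I comparison then shows that no Jacobi field along a unit-speed geodesic of $B$ vanishing at $t=0$ can vanish again before $t=\pi/\sqrt{C}$, whence
\[
\conju(B) \geq \frac{\pi}{\sqrt{C}}.
\]

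Substituting this bound into the inequality from Theorem \ref{conj} yields $\pi/\conju(B) \leq \sqrt{C}$ and therefore
\[
k \leq (\sqrt{C} - 1)(n-1),
\]
which is the claim. There is no real obstacle: the deduction is a one-line substitution. The only point worth noting is that the estimate is non-vacuous, i.e.\ $\sqrt{C}-1 \geq 0$, which follows from the remark recalled in the introduction that O'Neill's formula together with \cite{Wals} forces $C>1$ whenever $\sec_M\geq 1$ and $\pi$ has positive-dimensional fiber.
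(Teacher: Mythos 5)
Your proposal is correct and is exactly the argument the paper intends: the sentence preceding the corollary states that the conjugate radius of a manifold with $1\leq\sec\leq C$ is at least $\pi/\sqrt{C}$, and the corollary then follows by substitution into Theorem \ref{conj}. Your additional remarks (Rauch for the conjugate radius bound, $C>1$ via O'Neill and \cite{Wals}) only make explicit what the paper leaves implicit.
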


This bound is better than Jimenez's when $C>4$.

The arguments in the proof of Theorem \ref{conj} extend to Riemannian foliations, giving the following bound.

\begin{cor}\label{cor:foliation focal radius}
Let $\calF$ be a metric foliation with leaves of dimension $k$ in an $n+k$-dimensional compact manifold $M$ with $\sec\geq 1$. Then
\[
k\leq \left(\frac{\pi }{\foc(\calF)}-1\right)(n-1),
\] 
where $\foc(\calF)$ is the focal radius of the foliation.
\end{cor}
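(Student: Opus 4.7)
The plan is to observe that the proof of Theorem \ref{conj} ought to take place entirely along a single horizontal geodesic and should use only the horizontal/vertical splitting of $TM$, the Lagrangian subspace of leaf-Jacobi fields along that geodesic, and the location of the leaf-focal points, all three of which make intrinsic sense for any metric foliation. The conjugate radius of the base enters the submersion argument only to bound the spacing between consecutive leaf-focal points along a horizontal geodesic; in the foliated setting this role is played verbatim by $\foc(\calF)$.

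Concretely, I would fix a horizontal geodesic $\gamma:[0,\pi]\to M$, noting that horizontal geodesics in a metric foliation can be extended indefinitely because the horizontal distribution is preserved along them by the bundle-like condition on the metric. Along $\gamma$, let $L_0$ denote the leaf through $\gamma(0)$ and form the Lagrangian subspace $\Lambda$ of Jacobi fields $J$ perpendicular to $\gamma'$ satisfying the focal boundary condition $J(0)\in T_{\gamma(0)}L_0$ and $J'(0)+S_{\gamma'(0)}J(0)\perp L_0$. The degeneracies of $\Lambda$ along $\gamma$ are exactly the focal points of $L_0$, and by applying the definition of $\foc(\calF)$ to the leaf through each such focal point as a new origin, consecutive degeneracies must be separated by at least $\foc(\calF)$. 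Hence $\Lambda$ exhibits at most $\pi/\foc(\calF)-1$ degeneracies on $[0,\pi]$ in any fixed leaf-tangent direction.

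The rest of the proof should then mirror that of Theorem \ref{conj}: the curvature hypothesis $\sec\geq 1$ together with the Rauch/Morse comparison provides a lower bound on the focal index of $\Lambda$ along $\gamma$, which when set against the upper bound just obtained produces exactly $k\leq(\pi/\foc(\calF)-1)(n-1)$. The main obstacle, which is already the central obstacle in Theorem \ref{conj}, is the combinatorial splitting of $\Lambda$ into its $k$ leaf-tangent and $n-1$ transverse-horizontal directions that yields the factor $n-1$ rather than $n$; in the foliated setting this splitting depends only on the horizontal/vertical decomposition of $TM$, which is intrinsic to any metric foliation, so no essentially new argument beyond that of Theorem \ref{conj} should be needed.
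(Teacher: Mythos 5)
Your overall strategy (the leaf-focal Lagrangian along a horizontal geodesic, a Rauch-type lower bound on its index from $\sec\geq 1$, and an upper bound in which $\foc(\calF)$ plays the role of $\conju(B)$) is the same as the paper's, but the step where you actually produce the upper bound has a genuine gap, and it is precisely the step that carries the content. You claim that consecutive degeneracies of the fixed Lagrangian $\Lambda=L^{F}$ are separated by at least $\foc(\calF)$ ``by applying the definition of $\foc(\calF)$ to the leaf through each such focal point as a new origin.'' This does not follow: the later focal points of the \emph{original} leaf $F$ along $\gamma$ are zeros of $F$-Jacobi fields, not of Jacobi fields satisfying the focal initial conditions of the leaf through the previous focal point; these are different Lagrangians, and Proposition \ref{Lytchak inequality} only controls the difference of their indices, not the locations of their zeros. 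Moreover, even granting a spacing estimate, bounding the index of the full $(n+k-1)$-dimensional Lagrangian $\Lambda$ subinterval by subinterval would produce a factor $n+k-1$ per interval of length $\foc(\calF)$, not the factor $n-1$ you need; set against the lower bound $\ind_{\Lambda}[0,r\pi]\geq r(n+k-1)+(n-1)$ from Proposition \ref{lower Lagrangian bound in long intervals}, such a bound yields nothing.

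The missing ingredient is the $k$-dimensional isotropic subspace $W\subset \Lambda$ of holonomy Jacobi fields together with Wilking's transverse equation. Since holonomy fields never vanish, Lytchak's identity \eqref{eq:sum of indexes} gives $\ind_{\Lambda} I=\ind_{\Lambda/W}I$, where $\Lambda/W$ is the Lagrangian $\bL_0=\{X: X(0)=0\}$ of the $(n-1)$-dimensional transverse system; and for every basepoint $a$ the transverse Lagrangian $\bL_a$ is the projection of the focal Lagrangian of the leaf through $\gamma(a)$, so the transverse system has ``conjugate radius'' at least $\foc(\calF)$ and Proposition \ref{upper bound for Lagrangians - conj-radius} applies to it with $m=n-1$. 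This is how the paper obtains $\ind_{\bL_0}[0,r\pi]\leq \left(\left[r\pi/c\right]+1\right)(n-1)$ for $c<\foc(\calF)$, which combined with the lower bound and letting $r\to\infty$ gives the corollary. You are right that nothing beyond the machinery of Theorem \ref{conj} is needed, but the ``combinatorial splitting'' you defer to is exactly this transverse-equation reduction, and without it the argument as you describe it does not close.
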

The definition of focal radius is included at the end of section \ref{sec:proof theorem A}. 

It is also possible to give bounds on the fiber dimension related to the length of the shortest nontrivial closed geodesic in the base (that exists by a theorem of Fet and Lyusternik \cite{FL}). 

\begin{theorem}\label{thm:closed geodesic bounds}
Let $M^{n+k}, B^n$ be a compact Riemannian manifolds with $\sec\geq 1$, and 
$\pi:M^{n+k}\to B^n$ a Riemannian submersion. Denote by $l_0$ the length of the shortest closed geodesic in $B$. Then 
\[
k\leq \left(\frac{3\pi}{l_0}-1\right)(n-1).
\]
\end{theorem}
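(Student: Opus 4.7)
The plan is to imitate the proof of Theorem \ref{conj}, substituting a horizontal geodesic associated to the shortest closed geodesic of $B$ in place of a geodesic of length comparable to $\conju(B)$. Let $\gamma \colon [0, l_0] \to B$ be a shortest closed geodesic, and consider its three-fold iterate $\gamma^3 \colon [0, 3l_0] \to B$, which is again a smooth geodesic because $\gamma'(l_0) = \gamma'(0)$. Fix a horizontal lift $\widetilde{\gamma^3} \colon [0, 3l_0] \to M$; this is a horizontal geodesic of length $3l_0$ in $M$, and it will play the role played in Theorem \ref{conj} by a horizontal geodesic whose length measures against $\conju(B)$.

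Along $\widetilde{\gamma^3}$ one runs the abstract Jacobi / Lagrangian subspace index machinery from Section \ref{abstract jacobi}. The $k$-dimensional space of Jacobi fields initially tangent to the fiber over $\widetilde{\gamma^3}(0)$ generates a Lagrangian subspace whose index is sandwiched by an upper and a lower bound: the lower bound comes from $\sec_M \geq 1$ combined with the length $3l_0$ (Rauch comparison forces many vanishings on such a long geodesic), while the upper bound comes from counting the vanishings that the submersion structure along $\gamma^3 \subset B$ can accommodate.

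The one ingredient that differs from the proof of Theorem \ref{conj} is the substitution of $l_0/3$ for $\conju(B)$ when bounding the density of conjugate-like behavior along $\gamma^3$. This substitution is justified by the minimality of $\gamma$: a Jacobi-field obstruction on a sub-arc of $\gamma^3$ that is too short relative to $l_0/3$ would, via a cut-and-reconnect argument that exploits that $\gamma^3$ is a three-fold iterate of a closed geodesic, yield a closed curve in $B$ of length strictly less than $l_0$, contradicting minimality. Once this replacement is in place, the scheme of Theorem \ref{conj} runs verbatim along $\widetilde{\gamma^3}$, producing
\[
k \leq \left(\frac{\pi}{l_0/3} - 1\right)(n-1) = \left(\frac{3\pi}{l_0} - 1\right)(n-1).
\]
The main obstacle is making this replacement rigorous: one must carefully transfer a hypothetical excess of Jacobi-field vanishings on $\gamma^3$ into a variation through closed curves in $B$ that actually produces a closed curve shorter than $l_0$, and verify that the three-fold iteration (rather than two or any other multiple) is what makes the cut-and-reconnect argument go through with the constant $l_0/3$.
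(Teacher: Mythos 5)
Your overall scaffolding (lift a closed geodesic horizontally, sandwich the index of a suitable Lagrangian between a curvature lower bound and a base upper bound) matches the paper, but the step you flag as ``the one ingredient that differs'' is exactly where the argument breaks, and it cannot be repaired in the form you propose. You want to use $l_0/3$ as a substitute conjugate radius along $\gamma^3$, i.e.\ to claim that any sub-arc of parameter length $l_0/3$ carries at most $n-1$ vanishings. This is false: the shortest closed geodesic of a manifold with $\sec\geq 1$ can have conjugate points at parameter distance an arbitrarily small fraction of $l_0$ (rescale a long prolate ellipsoid of revolution so that $\sec\geq 1$; its waist is the shortest closed geodesic, the curvature along it blows up relative to its length, and the first conjugate point occurs at a distance that is $o(l_0)$). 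Your cut-and-reconnect argument does not rule this out, because shortcutting across a conjugate pair only produces a closed \emph{curve} of length $<l_0$, and such a curve is no contradiction to the minimality of $\gamma$ among closed \emph{geodesics}: it may simply be contractible and shrink to a point under curve shortening. So the replacement ``$\conju(B)\rightsquigarrow l_0/3$'' has no justification, and the factor $3$ does not come from iterating $\gamma$ three times.

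The paper's actual mechanism is different in two essential ways. First, it does not use the shortest closed geodesic at all in the index estimate: it invokes Lyusternik--Schnirelmann theory (via \cite{BTZ}) to produce a closed geodesic $\bal:[0,\ell]\to B$ whose number of interior conjugate points in $(0,\ell)$ is at most $m-1\leq n-1$; the length $l_0$ enters only at the very end through $\ell\geq l_0$. Second, the upper bound on $\ind_{\bL_0}[0,r\pi]$ does not come from a conjugate-radius count but from the periodicity of the Jacobi equation along a closed geodesic, namely Proposition~\ref{prop:upper bound closed geodesic}: $\ind_{L}[a,a+rl]\leq r\left(m+\ind_L[a,a+l)\right)+m$. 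Combining this with the lower bound $r(n-1+k)\le\ind_L[0,r\pi]$ and letting $r\to\infty$ gives $n-1+k\leq \frac{\pi}{\ell}\left(n-1+\ind_{\bL_0}[0,\ell)\right)$, and the $3$ arises because $\ind_{\bL_0}[0,\ell)\leq (n-1)+(m-1)\leq 2(n-1)$ (the vanishing at $t=0$ plus the controlled interior conjugate points), so that $n-1+\ind_{\bL_0}[0,\ell)\leq 3(n-1)$. To fix your write-up you would need both of these inputs: the periodic-index proposition and the existence of a closed geodesic with index at most $n-1$, neither of which is supplied by minimality of $\gamma$ or by passing to $\gamma^3$.
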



The paper is organized as follows: section \ref{sec:abstract jacobi} recalls some results in \cite{Lyt} about abstract Jacobi fields and reformulates some classical theorems in this setting; it ends with a brief summary of Wilking's transverse equation. Section \ref{sec:index bounds} bounds the index of Lagrangian subspaces of Jacobi fields in several situations needed for the proofs of Theorems \ref{conj} and \ref{thm:closed geodesic bounds}; these appear in sections \ref{sec:proof theorem A} and \ref{sec:Proof theorem B}.

The authors would like to thank F. Galaz-Garc\'{\i}a for helpful comments, and P. Piccione and W. Ziller for letting us know about references \cite{He} and \cite{BTZ} respectively.

\section{The Jacobi equation}\label{sec:abstract jacobi}
\subsection{Jacobi fields in an abstract setting}\label{abstract jacobi}
This section collects a few facts on Jacobi fields from \cite{Lyt}. 
Let $E$ be a euclidean vector space of dimension $m$ with  positive definite inner product $\ip{\,}{\,}$. For a smooth one-parameter family of self adjoint linear maps $R:\R\to \Sym(E)$, we consider the  equation $J''(t)+R(t)J(t)=0$ whose solutions we refer to as \emph{$R$-Jacobi fields} (or just \emph{Jacobi fields} if it is clear from the context to what $R$ we refer). 
We denote by $\Jac^R$ the space of Jacobi fields, a vector space of dimension $2m$; $	\Jac^R$ is a symplectic  vector space with form
$$
\ome:\Jac^R\times\Jac^R\to\R, \quad \ome(X,Y)=\ip{X}{Y'}-\ip{X'}{Y}
$$
where the right hand side of $\omega$ is independent of the $t$ chosen. 
A subspace $W$ is called \emph{isotropic} when $\ome$ vanishes in $W$; a maximal isotropic subspace is called a \emph{Lagrangian subspace}, or simply, a Lagrangian. Since $\ome$ is nondegenerate, it is clear that Lagrangian subspaces are just isotropic subspaces of dimension $m$; in the literature, Lagrangian spaces have often been called \emph{maximal self-adjoint spaces for the Jacobi operator} (see for instance \cite{VerZil} and \cite{Wil}). 

Since the inner product of $E$ is positive definite, zeros of Jacobi fields are isolated; we should mention that this is not true in the case of nonzero signature, as was noticed in \cite{He} and further studied in \cite{PT}.
Therefore, if  $I\subset \R$ is an interval, we can define the \emph{index} of an isotropic subspace $W\subset\Jac^R$ in $I$ as the number of times (with multiplicity) that fields in $W$ vanish in $I$; a more precise definition appears in \cite{Lyt}. We will denote this index as $\ind_W I$.

The indexes of different Lagrangians along the same interval are related by the following inequality in \cite{Lyt}.

\begin{prop}
\label{Lytchak inequality}
Let $E, R, {\Jac}^{R}$ be as previously described. Then for any Lagrangians $L_{1},L_{2}\subset {\Jac}^{R}$ and any interval $I\subset\R$, we have
\begin{equation}\label{desigualdad de lagrangianos}
\left| {\ind}_{L_{1}}I-{\ind}_{L_{2}}I\right|\leq\dim{E}-\dim{\left(L_{1}\cap L_{2}\right)}
\end{equation}
\end{prop}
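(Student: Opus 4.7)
My plan is to reduce the inequality to the case of transverse Lagrangians via symplectic reduction, then translate the Jacobi index into a Morse index of a quadratic form and invoke the classical linear-algebra bound on the index of a quadratic form restricted to subspaces.

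First I would set $W := L_1 \cap L_2$ and observe that $W$ is isotropic in $\Jac^R$. Passing to the symplectic quotient $\tildeX := W^{\ome}/W$, where $W^{\ome}$ denotes the symplectic annihilator of $W$, one obtains a symplectic vector space of dimension $2(m - \dim W)$ in which $\tilde{L}_i := L_i/W$ are Lagrangians with $\tilde{L}_1 \cap \tilde{L}_2 = 0$. The goal of this step is to identify $\tildeX$ with $\Jac^{\tilde{R}}$ for a suitable curve $\tilde{R}\colon\R\to \Sym(\tildeA)$ on a Euclidean space $\tildeA$ of dimension $m-\dim W$, and to verify that zeros of elements of $\tilde{L}_i$ correspond with multiplicity to zeros of representatives in $L_i$. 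Granting this, the inequality reduces to the transverse case, for which we must show $|\ind_{L_1} I - \ind_{L_2} I| \leq m$.

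For the transverse case, I would realize $\ind_L I$ as the Morse index of the index form. On a compact subinterval $[a,b]\subset I$ (chosen to avoid vanishings at its endpoints, which is possible since zeros are isolated), consider a Hilbert space $\calH_L$ of $H^1$ vector fields $X\colon [a,b]\to E$ whose boundary jet data $(X(a),X'(a),X(b),X'(b))$ lie in a subspace determined by $L$, equipped with the quadratic form
\[
Q(X)=\int_a^b \bigl(\ip{X'}{X'}-\ip{RX}{X}\bigr)\,dt
\]
plus boundary correction terms. The abstract Morse index theorem (Lytchak's version of the index theorem for such Jacobi families) identifies $\ind_L [a,b]=\ind(Q|_{\calH_L})$. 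Since $\calH_{L_1}$ and $\calH_{L_2}$ are subspaces of a common ambient Hilbert space and their boundary-data spaces agree on $\calH_{L_1\cap L_2}$, the classical fact $|\ind(Q|_{U_1})-\ind(Q|_{U_2})|\leq \dim U_1-\dim(U_1\cap U_2)$ applied to the (finite-dimensional) boundary-condition quotient yields the bound $\leq m-\dim(L_1\cap L_2)$, completing the proof after letting $[a,b]$ exhaust $I$.

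The main obstacle is the symplectic reduction step: verifying that $W^{\ome}/W$ canonically carries an abstract Jacobi structure whose zero-counting agrees with that of $L_i/W$. This requires that $W$ be invariant under the Jacobi flow in a suitable sense, which is automatic since $W\subset L_1\cap L_2$ is made of honest $R$-Jacobi fields, but constructing the quotient $\tilde{R}$ explicitly takes care. If this reduction proves awkward, an alternative is to proceed without it by the Maslov-crossing method: choose a continuous path $s\mapsto L(s)$ of Lagrangians with $L(0)=L_1$ and $L(1)=L_2$ in the Lagrangian Grassmannian, fixing $W$ pointwise; then $s\mapsto \ind_{L(s)}I$ is piecewise constant with jumps at values of $s$ where $L(s)$ meets some vertical Lagrangian $V_t:=\{J\in\Jac^R:J(t)=0\}$ for some $t\in I$, and positive-definiteness of $\ip{\,}{\,}$ forces every such crossing to be monotone. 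The total variation along a minimal path from $L_1$ to $L_2$ in the Lagrangian Grassmannian fixing $W$ is $\dim E-\dim W$, yielding the bound.
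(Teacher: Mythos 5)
The paper offers no proof of this proposition: it is imported verbatim from Lytchak \cite{Lyt}, so there is no in-text argument to compare you against and your attempt must stand on its own. Your main route (reduce modulo $W=L_1\cap L_2$, then express the index as the Morse index of an index form with Lagrangian boundary data and compare restrictions of a quadratic form) is the standard strategy behind this inequality, but two of its steps fail as written. In the reduction, the assertion that zeros of elements of $L_i/W$ correspond with multiplicity to zeros of representatives in $L_i$ is false: a field $J\in L_i$ can satisfy $J(t)\in\bW(t)$ with $J(t)\neq 0$, in which case its class vanishes at $t$ while no representative does. What is true is the additivity $\ind_{L_i}I=\ind_W I+\ind_{L_i/W}I$ (Lytchak's Lemma~3.1, quoted in the paper as \eqref{eq:sum of indexes}); the common term $\ind_W I$ cancels in the difference, so the reduction can be salvaged, but only through that lemma, not through the claimed zero-correspondence.

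More seriously, in the Morse-index step the forms attached to $L_1$ and $L_2$ are \emph{not} restrictions of a single quadratic form to two subspaces of one ambient space: the boundary correction is the shape-operator term $\ip{S^i_aX(a)}{X(a)}$, which depends on $L_i$, so the two forms already disagree on the intersection of their domains and the classical restriction inequality cannot be invoked as stated. The fix is to build the boundary jet into the Hilbert space: work with pairs $(v,X)$ with $v\in E\oplus E$, $X(a)=\pi_1(v)$, $X(b)=0$, equipped with the universal correction $\ip{\pi_2(v)}{\pi_1(v)}$, and cut out the subspace for $L_i$ by the single linear condition $v\in\Lambda^i:=\{(J(a),J'(a)):J\in L_i\}$. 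Since Jacobi fields are determined by their initial data, $\dim(\Lambda^1\cap\Lambda^2)=\dim(L_1\cap L_2)$, and the codimension count then gives exactly $m-\dim(L_1\cap L_2)$; note this renders your reduction step unnecessary. Two further points need care: $\ind_L$ of a closed interval counts zeros at the endpoints, which the index form over $[a,b]$ does not see, so you should enlarge $I$ to $(a-\eps,b+\eps)$ using the isolation of zeros rather than shrink it as you propose. Finally, your Maslov-crossing alternative does not work as sketched: for a long interval $I$ with $R>0$, every Lagrangian has positive index on $I$, so the set of parameters $s$ at which $L(s)$ meets some $V_t$ with $t\in I$ is typically all of $[0,1]$, and $s\mapsto\ind_{L(s)}I$ is not piecewise constant with isolated monotone jumps; the correct version of that route is the H\"ormander-index estimate, which is essentially the inequality to be proved.
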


\medskip

\subsection{The transverse Jacobi equation}
Let $W$ be an isotropic subspace of Jacobi fields of $(E, R)$ and $t\in\R$, we define
$$
W(t)=\left\{J(t)\,:\, J\in W\right\}, \quad W^t=\left\{J\in W\,:\,J(t)=0\right\}.
$$
For each $t\in \R$, the subspace
$$
\bW(t)=W(t)\oplus\left\{J'(t)\,:\,J\in W^t\right\}
$$ 
varies smoothly on $t$ as was shown in \cite{Wil}; denote by $H(t)$ its orthogonal complement, and by $e=e^h+e^v$ the splitting of a vector under the sum $E=H(t)\oplus \bW(t)$. We use $\calH$ to denote the vector bundle over $\R$ formed by the $H(t)$. There is a covariant derivative on $\calH$ induced from $E$: if $X:\R\to E$ is a section of $\calH$, we define
$$
\frac{D^hX}{dt}(t)= X'(t)^h.
$$
The covariant derivative $D^h/dt$ defines parallel sections, and preserves the inner product induced on $\calH$ from $E$. Let $E_1$ be an inner vector space of dimension the rank of $\calH$; using a parallel trivialization of $\calH$, we can identify sections of $\calH$ with maps $X:\R\to E_1$, and the covariant derivative $D^h/dt$ with standard derivation. 

 Modulo these identifications, Wilking's transverse equation reads as
 $$
 X''(t)+R^W(t)X(t)=0, \quad R^W(t)X(t)=\left[R(t)X(t)\right]^h+3 A_tA_t^*X(t),
 $$
 where for each $t$, the map $A_t:W(t)\to H(t)$ is linear and its definition can be found in \cite{Wil}. 

Thus we obtain a new Jacobi setting $(E_1, R^W)$ with $R^W$ as the new curvature operator used to construct the Jacobi equation; Wilking proved that the projection of any $R$-Jacobi field onto $\calH$ is a solution of the transverse equation, i.e. an $R^W$-Jacobi field. Moreover, as Lytchak observed,  any Lagrangian for $(E_1, R^W)$ is obtained projecting some Lagrangian that contains $W$ and vice versa.

\section{Bounds on the index}\label{sec:index bounds}
Recall that given an interval $I\subset\R$ and a Lagrangian $L\subset \Jac^R$, we denote by $\ind_L I$ the index of $L$ in $I$ and by $\ind_L(t_0)$ the dimension of the vector subspace of $L$ formed by those Jacobi fields in $L$ that vanish at $t_0$.

Given $a\in\R$, denote by 
$L_a$ the Lagrangian subspace of $\Jac^R$ defined as
\begin{equation}\label{eq:los L_a}
L_a := \left\{\, Y\in\Jac^R\,:\, Y(a)=0\,\right\}.
\end{equation}
Recall that $\dim E=m$.
\subsection{Upper bounds based on the conjugate radius}
All along this subsection we will assume that there is some positive number $c>0$ such that for any $a\in\R$ and any Jacobi field with $Y(a)=0$, $Y$ does not vanish again in $(a,a+c]$. Clearly
$$
\ind_{L_a}(a, a+c)=\ind_{L_a}(a, a+c] =0, \quad \ind_{L_a}[a, a+c) =m.
$$
Inequality \eqref{desigualdad de lagrangianos} shows that for an arbitrary Lagrangian $L$, 
\begin{equation}\label{eq:index arbitrary}
\ind_L(a,a+c]\leq m.
\end{equation}

Our next aim is to improve this to  larger intervals:
\begin{prop}\label{upper bound for Lagrangians - conj-radius}
For any Lagrangian $L$ and any positive integer $r$ we have
$$
\ind_L[a,a+rc]\leq (r+1)m.
$$
\end{prop}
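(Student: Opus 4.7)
The plan is to split the closed interval $[a,a+rc]$ into the singleton $\{a\}$ together with $r$ consecutive half-open pieces of length $c$, and then apply the already-established estimate \eqref{eq:index arbitrary} to each piece while handling the endpoint $a$ directly.

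Concretely, I would first write the disjoint decomposition
\[
[a,a+rc] \;=\; \{a\} \;\cup\; (a,a+c]\;\cup\;(a+c,a+2c]\;\cup\;\cdots\;\cup\;(a+(r-1)c,a+rc].
\]
Because the inner product on $E$ is positive definite, zeros of $R$-Jacobi fields are isolated, so the index $\ind_L$ is additive across such a decomposition, each zero being counted with its multiplicity exactly once. This yields
\[
\ind_L[a,a+rc] \;=\; \ind_L\{a\} \;+\; \sum_{j=0}^{r-1} \ind_L\bigl(a+jc,\,a+(j+1)c\bigr].
\]

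Next I would bound each summand by $m$. The endpoint contribution is $\ind_L\{a\}=\dim\{\,Y\in L : Y(a)=0\,\}\leq \dim L = m$. For each half-open piece I would apply \eqref{eq:index arbitrary} after shifting the base point from $a$ to $a+jc$; this shift is legitimate because the standing hypothesis of the subsection is uniform in $\R$, i.e.\ no Jacobi field vanishing at $a+jc$ vanishes again before $a+(j+1)c$. Summing the $r+1$ contributions gives the asserted bound $(r+1)m$.

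I do not anticipate a genuine obstacle: the real content, namely Lytchak's comparison \eqref{desigualdad de lagrangianos} of an arbitrary Lagrangian with $L_a$, has already been packaged into \eqref{eq:index arbitrary}. The one point worth a sentence of care in the final write-up is that the definition of $\ind_L$ taken from \cite{Lyt} behaves additively on the above disjoint union and that the singleton contribution $\{a\}$ is not double-counted by the adjacent half-open piece $(a,a+c]$; this is consistent with the convention of using half-open intervals on one side and a separate evaluation at the left endpoint.
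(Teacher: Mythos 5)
Your proposal is correct and follows essentially the same route as the paper: the authors likewise decompose $[a,a+rc]$ into the left endpoint plus $r$ half-open subintervals of length $c$, bound the endpoint contribution by $\dim L=m$, and apply \eqref{eq:index arbitrary} to each subinterval. Your explicit remarks on additivity of the index over the disjoint decomposition and on the legitimacy of shifting the base point are details the paper leaves implicit, but they introduce nothing new.
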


\begin{proof}
Breaking the interval $[a,a+rc]$ into subintervals of length $c$ and using \eqref{eq:index arbitrary} repeatedly, we get that
$$
\ind_{L}[a,a+rc]=\ind_{L}(0)+\sum_{i=0}^{r-1}\ind_{L}(a+ic,a+(i+1)c] 
\leq m+rm.
$$
\end{proof}

\subsection{Curvature-related lower bounds}
To get a lower bound on the index of a Lagrangian $L$, we need to establish the existence of conjugate points for the fields in $L$; Rauch's theorem gives precisely that for a Lagrangian of the form $L_a$ as defined in \eqref{eq:los L_a}. We will then use Proposition \ref{Lytchak inequality} to relate this to the index of an arbitrary Lagrangian. 
 
 We will say that  the curvature $R$ satisfies $R(t)\geq \delta$ for all $t\in\R$ if 
$\ip{R(t)v}{v}\geq \delta\|{v}\|^2$ for any vector $v\in E$. Our first result is a quantitative refinement of Corollary 10 in \cite{Wil}. 

\begin{prop}
\label{better rauch}
Assume that there is some $\delta>0$ such that the curvature $R$ satisfies $R(t)\geq \delta$ for all $t\in\R$. Then for any $a\in\R$, the set 
$$
\calA=\left\{\,Y\in L_a\,:\,Y(t)=0 \text{ for some } t\in \left(a,a+\didelta\right]   \,\right\}
$$
generates $L_a$.
\end{prop}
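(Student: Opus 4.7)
The plan is to argue by induction on $m = \dim E$. The base case $m = 1$ is immediate from Sturm's scalar comparison theorem: the ODE $y'' + R(t)y = 0$ with $R \geq \delta > 0$ and $y(a)=0$ forces a zero in $(a,a+\pi/\sqrt{\delta}]$, so every nonzero element of $L_a$ lies in $\calA$ and $\calA$ spans $L_a$.

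For the inductive step I would first invoke Rauch's comparison theorem (in the Morse-index form used in \cite{Wil}) to locate a first conjugate point $t_1^* \in (a,a+\pi/\sqrt{\delta}]$ of the original equation, pick a nonzero $Y_1 \in L_a \cap L_{t_1^*}$, and take $W = \spann(Y_1) \subset L_a$, which is automatically isotropic. Wilking's transverse Jacobi construction then produces $(E_1, R^W)$ with $\dim E_1 = m-1$ and
$$\langle R^W X, X\rangle \;=\; \langle R X,X\rangle + 3\,\|A_t^* X\|^2 \;\geq\; \delta\,\|X\|^2,$$
so the curvature hypothesis is preserved. Since $W \subset L_a$, the projection of $L_a$ is a Lagrangian $\bar L_a \subset \Jac^{R^W}$ of dimension $m-1$ whose elements all vanish at $a$. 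The induction hypothesis applied to this transverse setting yields a basis $\bar Y_2,\dots,\bar Y_m$ of $\bar L_a$ with each $\bar Y_i$ vanishing at some $t_i \in (a,a+\pi/\sqrt{\delta}]$.

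The last step is to lift each $\bar Y_i$ to an element $Y_i \in \calA$; then $\{Y_1,\dots,Y_m\}$ is a basis of $L_a$ contained in $\calA$. Any lift $\tilde Y_i \in L_a$ of $\bar Y_i$ satisfies $\tilde Y_i(t_i) \in \bar W(t_i)$. When $Y_1(t_i) \neq 0$, $\bar W(t_i) = \spann(Y_1(t_i))$ and one sets $Y_i = \tilde Y_i - c_i Y_1$ for the appropriate scalar $c_i$, a construction independent of the chosen lift. The main technical obstacle---the step I expect to need the most care---is the remaining case $Y_1(t_i) = 0$, where subtracting a multiple of $Y_1$ cannot cancel $\tilde Y_i(t_i)$. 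I would overcome this using the identification $\bar L \cong L/W$ valid for any Lagrangian $L \supset W$: in this bad case $W \subset L_a \cap L_{t_i}$, so both $L_a$ and $L_{t_i}$ project to Lagrangians in the transverse equation and $\bar L_a \cap \bar L_{t_i} \cong (L_a \cap L_{t_i})/W$. In particular the projection $L_a \cap L_{t_i} \to \bar L_a \cap \bar L_{t_i}$ is surjective, producing a lift $Y_i \in L_a \cap L_{t_i} \subset \calA$ of $\bar Y_i$ and closing the induction.
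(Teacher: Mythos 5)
Your proof is correct and follows essentially the same route as the paper: Rauch's comparison to produce a first element $Y_1\in\calA$, then Wilking's transverse equation for $W=\spann(Y_1)$ to drop the dimension by one and iterate (the paper phrases this as an iteration rather than a formal induction, but the content is identical, including the verification that $R^W\geq\delta$). The only place you go beyond the paper is the degenerate case $Y_1(t_i)=0$, which the paper's step (3) passes over silently; your lifting argument via $\bar L_a\cap\bar L_{t_i}\cong (L_a\cap L_{t_i})/W$ does close it, though it can be dispatched more directly: isotropy of $L_a$ gives $0=\ome(\tilde Y_i,Y_1)=\ip{\tilde Y_i(t_i)}{Y_1'(t_i)}$ when $Y_1(t_i)=0$, and since vanishing of the transverse field means $\tilde Y_i(t_i)\in\bW(t_i)=\spann(Y_1'(t_i))$ with $Y_1'(t_i)\neq 0$, this forces $\tilde Y_i(t_i)=0$, so any lift already lies in $\calA$.
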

\begin{proof}
The proof consists on  using of Wilking's transversal equation repeatedly. We describe how to proceed:
\begin{enumerate}
\item We compare $R(t)$ to the constant curvature case $\bR(t)= \delta I$; Rauch's theorem gives us that there is some nonzero $Y_1$ in $\calA$ vanishing for some $t_1\in (a,a+\didelta]$.
\item Let $W_1\subset L_a$ be the vector subspace generated by $Y_1$; we consider the transverse Jacobi equation induced by $W_1$ in $L_a$. In $L_a/W_1$ there is a Jacobi equation of the form 
$$
Y''+R_1Y=0, \quad R_1(t)=R(t)^h+3A_tA_t^*,
$$
and therefore $\ip{R_1(t)v}{v}\geq\ip{R(t)v}{v}\geq \delta\|v\|^2$ for any $v\in W_1(t)^\perp\subset E$. 
Moreover, after taking the $W_1$-orthogonal component,  the fields in $L_a$ give an $R_1$-Lagrangian $L_1$. It is clear that every vector field in $L_1$ vanishes at $t=a$. 
\item Once again, we compare $R_1(t)$ to $\delta I$ to obtain some nonzero $X_2\in L_a$ such that $X_2^\perp$ vanishes at some time $t_2$ in $(a,a+\didelta]$; this merely means that 
$
X_2(t_2)=\lambda Y_1(t_2)
$ for some $\lambda\in\R$, and thus the field
$Y_2=X_2-\lambda Y_1$ is linearly independent with respect to $Y_1$ and lies in $\calA$.
\item Clearly, the process can be iterated as needed until we obtain a basis of $L_a$.
\end{enumerate} 
\end{proof}

Proposition \ref{better rauch} allows us to obtain good lower bounds for the index of a Lagrangian over long intervals. They can also be obtained using the Morse-Schoenberg lemma \cite{Sak} and Proposition \ref{Lytchak inequality}.

\begin{prop}\label{lower Lagrangian bound in long intervals}
Let $a\in\R$; when $R\geq\delta$, the index of any Lagrangian subspace $L$ of Jacobi fields satisfies 
\[
\ind_L\left[a,a+r\pi/\sqrt{\delta}\right]\geq rm+\ind_L(a)
\]
for any positive integer $r$.
\end{prop}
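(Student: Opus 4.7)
The plan is to chop $[a, a+r\didelta]$ into $r$ consecutive subintervals of length $\didelta$ with endpoints $b_i = a + i\didelta$ for $i=0,\dots,r$, and on each $[b_i,b_{i+1}]$ to compare the arbitrary Lagrangian $L$ with the ``nice'' Lagrangian $L_{b_i}$ from \eqref{eq:los L_a} by means of Proposition~\ref{Lytchak inequality}.

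First I would verify $\ind_{L_{b_i}}[b_i,b_{i+1}] \geq 2m$ for each $i$. Every field in $L_{b_i}$ vanishes at $b_i$ by definition, so the left endpoint already contributes $\dim L_{b_i} = m$. For the half-open part, Proposition~\ref{better rauch} applied at $b_i$ produces a basis $\{Y_1,\dots,Y_m\}$ of $L_{b_i}$ in which each $Y_j$ has a zero $s_j \in (b_i, b_{i+1}]$. Setting $V_s = \{Y \in L_{b_i} : Y(s)=0\}$, the addition map $\bigoplus_{s} V_s \to L_{b_i}$ (the sum running over the distinct values of the $s_j$) has $Y_1,\dots,Y_m$ in its image and is therefore surjective; since its domain has dimension $\sum_s \dim V_s = \ind_{L_{b_i}}(b_i,b_{i+1}]$, this forces $\ind_{L_{b_i}}(b_i, b_{i+1}] \geq m$, hence $\ind_{L_{b_i}}[b_i,b_{i+1}] \geq 2m$.

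Next, I would apply Proposition~\ref{Lytchak inequality} to $L_1 = L$, $L_2 = L_{b_i}$ on $I=[b_i,b_{i+1}]$. Noting that $L \cap L_{b_i}$ is precisely the subspace of fields of $L$ vanishing at $b_i$, so that $\dim(L \cap L_{b_i}) = \ind_L(b_i)$, the inequality gives
\[
\ind_L[b_i,b_{i+1}] \;\geq\; \ind_{L_{b_i}}[b_i,b_{i+1}] - m + \ind_L(b_i) \;\geq\; m + \ind_L(b_i),
\]
that is, $\ind_L(b_i,b_{i+1}] \geq m$. Summing over $i$ and separating out the contribution at $a = b_0$ yields
\[
\ind_L[a,\, a + r\didelta] \;=\; \ind_L(a) + \sum_{i=0}^{r-1} \ind_L(b_i, b_{i+1}] \;\geq\; \ind_L(a) + rm,
\]
as required.

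The delicate step is the passage from the generating assertion of Proposition~\ref{better rauch} (a statement about spans) to the multiplicity-counted bound $\ind_{L_{b_i}}(b_i, b_{i+1}] \geq m$: several of the basis fields $Y_j$ could in principle share a common zero, but the surjective addition map sidesteps this issue cleanly. Once that bound is in hand, the remainder is routine bookkeeping with Lytchak's inequality.
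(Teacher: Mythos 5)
Your proof is correct and follows essentially the same route as the paper: the same decomposition of $[a,a+r\pi/\sqrt{\delta}]$ into subintervals of length $\pi/\sqrt{\delta}$, the same comparison of $L$ with $L_{b_i}$ via Proposition~\ref{Lytchak inequality}, and the same appeal to Proposition~\ref{better rauch} to get $\ind_{L_{b_i}}[b_i,b_{i+1}]\geq 2m$. The only difference is that you spell out, via the surjective addition map, why the generating statement of Proposition~\ref{better rauch} yields the multiplicity-counted bound $\ind_{L_{b_i}}(b_i,b_{i+1}]\geq m$, a step the paper asserts without comment.
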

\begin{proof} Without loss of generality we can assume that $a=0$ and write the proof for this case. 
Consider the closed intervals
\[
I_j=[j\didelta,(j+1)\didelta].
\] 
Proposition \ref{better rauch} says that 
\[
\ind_{L_{j\didelta}}I_j\geq 2m;
\]
while Proposition \ref{Lytchak inequality} gives us 
\[
\ind_LI_j\geq \ind_{L_{j\didelta}}I_j-m+\dim(L\cap L_{j\didelta})\geq m+\ind_L(j\didelta).
\]
Breaking the interval $[0,r\didelta]$ into the $I_j$'s, we conclude that
\begin{multline}
\ind_L\left[0,r\Didelta\right] =
\sum_{j=0}^{r-1}\ind_L I_j-\sum_{j=1}^{r-1}\ind_L\left(j\Didelta\right)\geq \\
\geq 
\sum_{j=0}^{r-1}\left(m+\ind_L\left(j\Didelta\right)\right)-\sum_{j=1}^{r-1}\ind_L\left(j\Didelta\right)=
rm+\ind_L(0).
\end{multline}

\end{proof}

A consequence of the last results is the following extension of Proposition \ref{better rauch} to arbitrary Lagrangians:
\begin{prop}\label{prop:span_arbitrary_Lagrangian}
Let $a\in \R$; when $R\geq\delta$, for every Lagrangian subspace $L$ of ${\Jac}^{R}$ the set
\[
\left\{\, Y\in L\,:\,\ Y(t)=0 \text{ for some } t\in \left(a,a+\didelta\right]\, \right\}
\]
spans $L$.
\end{prop}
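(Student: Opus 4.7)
The plan is to argue by contradiction: suppose $W := \spann\{Y \in L : Y(t) = 0 \text{ for some } t \in (a, a+\didelta]\}$ is a proper subspace of $L$, and derive a contradiction by applying Wilking's transverse construction to the isotropic subspace $W$. Since $W \subseteq L$ and $L$ is Lagrangian, $W$ is isotropic, so Wilking's construction gives a transverse Jacobi equation on a space $E_1$ of dimension $m_1 = m - \dim W > 0$, with curvature operator $R^W = R^h + 3 A_t A_t^{*}$. Because $R \geq \delta$ on $E$ and $A_t A_t^{*}$ is positive semi-definite, one has $R^W \geq \delta$ on $E_1$. Moreover, since $W \subseteq L$, the Lagrangian $L$ projects to a Lagrangian $\bar L$ in $(E_1, R^W)$.

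Next I would invoke Proposition \ref{lower Lagrangian bound in long intervals} in this transverse setting with $r = 1$ to obtain
\[
\ind_{\bar L}(a, a+\didelta] = \ind_{\bar L}[a, a+\didelta] - \ind_{\bar L}(a) \geq m_1 \geq 1.
\]
Hence there is a nonzero $\bar Y \in \bar L$ vanishing at some $t^* \in (a, a+\didelta]$. Any lift $Y \in L$ of $\bar Y$ satisfies $Y \notin W$ and $Y(t^*)^h = 0$, so $Y(t^*) \in \bar W(t^*) = W(t^*) \oplus V'(t^*)$, where $V'(t^*) = \{J'(t^*) : J \in W^{t^*}\}$.

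The key step, and where I expect the main difficulty to lie, is upgrading this to $Y(t^*) \in W(t^*)$. This uses both the Lagrangian property of $L$ and the isotropy of $W$: for any $J \in W^{t^*}$, the symplectic pairing $\omega(Y, J)$ vanishes identically on $\R$ (as $L$ is Lagrangian and $J \in W \subseteq L$), and evaluating at $t^*$, where $J(t^*) = 0$, yields $\langle Y(t^*), J'(t^*) \rangle = 0$. Thus $Y(t^*) \perp V'(t^*)$. The analogous computation with $Y$ replaced by any field in $W$, using $\omega|_{W \times W} = 0$, gives $W(t^*) \perp V'(t^*)$, so the decomposition $\bar W(t^*) = W(t^*) \oplus V'(t^*)$ is orthogonal. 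This forces $Y(t^*) \in W(t^*)$.

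With this in hand the contradiction closes at once: choose $Z \in W$ with $Z(t^*) = Y(t^*)$, so that $Y - Z \in L$ vanishes at $t^* \in (a, a+\didelta]$. By the very definition of $W$, either $Y - Z = 0$ or $Y - Z$ lies in the generating set of $W$; in both cases $Y - Z \in W$, and therefore $Y = (Y - Z) + Z \in W$, contradicting $Y \notin W$. Hence $W = L$, which is the claim.
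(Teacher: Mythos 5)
Your proof is correct and follows essentially the same route as the paper's: the paper obtains a first vanishing field from Proposition \ref{lower Lagrangian bound in long intervals} with $r=1$ and then iterates the transverse-equation argument of Proposition \ref{better rauch}, while you package that same iteration as a single contradiction by applying Wilking's construction to the full span $W$ and invoking Proposition \ref{lower Lagrangian bound in long intervals} in the quotient $(E_1,R^W)$. Your explicit verification that $Y(t^*)\in W(t^*)$, via the orthogonality of $W(t^*)$ and $\{J'(t^*):J\in W^{t^*}\}$, correctly supplies the detail that the paper leaves implicit by referring back to the proof of Proposition \ref{better rauch}.
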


\begin{proof}
Proposition \ref{lower Lagrangian bound in long intervals} for $r=1$ gives 
\[
\ind_L[a,a+\didelta]\geq m+\ind_L(a).
\]
Therefore there exists a $Y_1\in L$ such that 
$Y_{1}(t_1)=0$ for some $t_1\in (a, a+\didelta]$. The proof is then identical to that of Proposition \ref{better rauch}.
\end{proof}

This has the following geometric application:\begin{thm}\label{thm:submanifold}
Let $M$ be an $n$-dimensional manifold with $\sec\geq 1$ and $\al:\R\to M$ a geodesic orthogonal to a submanifold $N$ at $\al(0)$. Then there are at least $n-1$ focal points of $N$ along $\al$ in the interval $(0,\pi]$.
\end{thm}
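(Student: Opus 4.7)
The plan is to deduce the theorem directly from Proposition \ref{lower Lagrangian bound in long intervals} applied to the Lagrangian of $N$-Jacobi fields along $\alpha$.

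First I would set up the abstract Jacobi framework along $\alpha$. Let $E$ be the $(n-1)$-dimensional Euclidean space obtained by parallel translation along $\alpha$ of $\alpha'(0)^\perp\subset T_{\alpha(0)}M$. Via parallel transport, Jacobi fields along $\alpha$ orthogonal to $\alpha'$ correspond to solutions of $J''(t)+R(t)J(t)=0$ on $E$, where $R(t)$ is the symmetric endomorphism induced by $v\mapsto R^M(v,\alpha'(t))\alpha'(t)$. The curvature hypothesis $\sec_M\geq 1$ translates into $R(t)\geq 1$, so the results of Section \ref{sec:index bounds} apply with $\delta=1$ and $m=n-1$.

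Next I would introduce the $N$-Lagrangian. Let $d=\dim N$ and set
$$
L_N:=\bigl\{\,J\in\Jac^R\,:\,J(0)\in T_{\alpha(0)}N,\ \bigl(J'(0)\bigr)^{T_{\alpha(0)}N}=-S_{\alpha'(0)}\bigl(J(0)\bigr)\,\bigr\},
$$
where $S_{\alpha'(0)}\colon T_{\alpha(0)}N\to T_{\alpha(0)}N$ is the shape operator of $N$ in the direction $\alpha'(0)$ (well-defined because $\alpha'(0)\perp T_{\alpha(0)}N$), and the superscript denotes the tangential component to $N$. These are exactly the Jacobi fields generated by variations of $\alpha$ through geodesics orthogonal to $N$, and their zeros in $(0,\pi]$ counted with multiplicity are precisely the focal points of $N$ along $\alpha$ in that interval. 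A short calculation using the self-adjointness of $S_{\alpha'(0)}$ shows that the symplectic form $\omega$ vanishes identically on $L_N\times L_N$, and a dimension count gives $\dim L_N=(n-1-d)+d=n-1=m$, so $L_N$ is a Lagrangian of $\Jac^R$.

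With this in place, Proposition \ref{lower Lagrangian bound in long intervals} applied to $L_N$ with $a=0$, $r=1$, $\delta=1$ yields
$$
\ind_{L_N}[0,\pi]\geq (n-1)+\ind_{L_N}(0).
$$
Since the zeros of Jacobi fields are isolated, the index decomposes as $\ind_{L_N}[0,\pi]=\ind_{L_N}(0)+\ind_{L_N}(0,\pi]$, and subtracting $\ind_{L_N}(0)$ from both sides gives $\ind_{L_N}(0,\pi]\geq n-1$, which is the desired conclusion. The only nontrivial step is the verification that $L_N$ is Lagrangian, a standard computation using the symmetry of the shape operator together with $\alpha'(0)\perp T_{\alpha(0)}N$; after that, the theorem drops out of the index bound already established in the paper.
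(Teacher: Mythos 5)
Your proof is correct and follows essentially the same route as the paper: the paper also introduces the Lagrangian $L^N$ of $N$-Jacobi fields and deduces the result from the index bounds of Section \ref{sec:index bounds}, citing Proposition \ref{prop:span_arbitrary_Lagrangian} (whose proof is exactly the case $r=1$ of Proposition \ref{lower Lagrangian bound in long intervals} that you invoke directly). Your version merely spells out the verification that $L^N$ is Lagrangian and the additivity of the index, which the paper leaves implicit.
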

\begin{proof}
This follows from Proposition \ref{prop:span_arbitrary_Lagrangian} after choosing the Lagrangian of $N$-Jacobi fields along $\al$ defined as 
\[
L^{N}=
\left\{ 
J\in\Jac^R \,:\, J(0)\in T_{\al(0)}N, J'(0)+S_{\al'(0)}J(0)\perp T_{\al(0)}N 
\right\}
\]
\end{proof}

\subsection{Index bounds for periodic Jacobi fields}

In this section we examine the index of Lagrangians when the solutions of the Jacobi equation are periodic with common period. We will show that such index is always bounded above by some linear function related to multiples of the period.

\begin{prop}\label{prop:upper bound closed geodesic}
Suppose there is some $l>0$ such that for every Jacobi field $J$, the field $t\to J(t+l)$ is also a Jacobi field.
 Then for any Lagrangian $L$ in $\Jac^R$ we have 
\[
\ind_L[a,a+rl] \leq r\left(m+\ind_L[a,a+l)\right)+m
\]
for any positive integer $r$.
\end{prop}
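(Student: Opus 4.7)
The plan is to exploit the hypothesis that translation by $l$ is a symmetry of the Jacobi equation, in order to reduce the estimate on $[a,a+rl]$ to $r$ copies of a bound on a single fundamental interval of length $l$, then apply Lytchak's inequality (Proposition \ref{Lytchak inequality}) on each piece. More precisely, the map $\sigma:\Jac^R\to\Jac^R$ defined by $(\sigma J)(t)=J(t+l)$ is, by hypothesis, a well-defined linear isomorphism. Since the symplectic form $\omega(J,K)=\ip{J}{K'}-\ip{J'}{K}$ is independent of $t$, $\sigma$ preserves $\omega$ and therefore sends Lagrangians to Lagrangians. Iterating, for every integer $j\geq 0$ the subspace $L^{(j)}:=\sigma^{-j}(L)$ is a Lagrangian, and a direct translation of indices yields
\[
\ind_{L^{(j)}}(I)=\ind_L(I-jl)
\]
for any interval $I$; in particular $\ind_{L^{(j)}}[a+jl,a+(j+1)l)=\ind_L[a,a+l)$.

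With this symmetry in hand, the key steps are: (i) partition $[a,a+rl]$ as the disjoint union of the half-open intervals $I_j:=[a+jl,a+(j+1)l)$ for $j=0,\dots,r-1$, together with the single point $\{a+rl\}$; (ii) on each $I_j$, apply Proposition \ref{Lytchak inequality} to the pair $(L,L^{(j)})$ to obtain
\[
\ind_L I_j \;\leq\; m+\ind_{L^{(j)}}I_j \;=\; m+\ind_L[a,a+l);
\]
(iii) sum over $j$ to get $\ind_L[a,a+rl)\leq r\bigl(m+\ind_L[a,a+l)\bigr)$; (iv) bound $\ind_L(a+rl)\leq \dim E = m$ trivially and add it on to reach
\[
\ind_L[a,a+rl]\;\leq\; r\bigl(m+\ind_L[a,a+l)\bigr)+m.
\]

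The one delicate point is the choice of half-open intervals. If instead one worked with closed intervals $[a+jl,a+(j+1)l]$, then each interior endpoint $a+jl$ would be double-counted, and after subtracting $\sum_{j=1}^{r-1}\ind_L(a+jl)$ one would need to absorb the surplus $r\,\ind_L(a+l)-\sum_{j=1}^{r-1}\ind_L(a+jl)$ (which arises from expanding $\ind_L[a,a+l]=\ind_L[a,a+l)+\ind_L(a+l)$) into a single $+m$; the multiplicities $\ind_L(a+jl)$ at different endpoints need not be comparable, so this does not close up. The half-open partition avoids this bookkeeping altogether and isolates the right endpoint $a+rl$ as the only term contributing the extra $m$ on the right-hand side.
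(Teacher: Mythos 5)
Your proposal is correct and follows essentially the same route as the paper: the paper also translates $L$ by multiples of $l$ to get Lagrangians $L^j$, identifies $\ind_L[a+jl,a+(j+1)l)$ with the index of a translated Lagrangian over a single fundamental half-open interval, applies Proposition \ref{Lytchak inequality} to each such pair, and bounds the contribution of the final endpoint $a+rl$ by $m$. The only cosmetic difference is that you translate the Lagrangian backward while keeping the interval fixed, whereas the paper translates the interval back to $[0,l)$ and the Lagrangian forward; these are equivalent.
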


\begin{proof}
As usual, we will write the proof for $a=0$. We start by choosing some basis of $L$, given by $X_1,\dots, X_m$; 
for any positive integer $r$, consider the Jacobi fields defined as 
\[
X_i^r(t)=X_i(t+rl), \quad i=1,\dots, m.
\]
Let $L^r$ the subspace generated by $X_1^r,\dots, X_m^r$; it is Lagrangian, with $L^0=L$. Clearly
\[
\ind_L\left[jl,(j+1)l\right)=\ind_{L^j}[0,l).
\]
 
Using (\ref{desigualdad de lagrangianos}), we have that 
\[
\ind_{L}[0,rl]=\sum_{j=0}^{r-1} \ind_{L^j}[0,l)+\ind_{L^r}(0) 
\leq \sum_{j=0}^{r-1}\left(\ind_L\left[0,l\right)+m\right)+m,
\]
as claimed. 

\end{proof}

%
%
%

\section{Proof of Theorem \ref{conj}}\label{sec:proof theorem A}
We recall some basic facts about Riemannian submersions before proving Theorem \ref{conj}.
Let $\pi:M\to B$ be such a submersion where $M$ and $B$ have dimensions  $n+k$ and $n$ respectively. We will usually overline the notation for objects in the base, to distinguish them from those in $M$. To facilitate the reading, we recall briefly some of the main facts about \emph{projectable Jacobi fields}; the reader can find more information about them in \cite{ON} and \cite[section 1.6]{GrW}; we will use, in particular, the notation from this latter reference.

\begin{defi}
Choose some unit speed geodesic $\al:I\to M$ horizontal for the submersion, and denote by $\bal$ its image $\pi\circ\al$ in $B$. 
A Jacobi field $Y$ along $\al$ is \emph{projectable} if it satisfies
$$
Y'^v=-S_{\al'}Y^v-A_{\al'}Y^h
$$
where $S_{\al'}$ and $A_{\al'}$ are the second fundamental forms of the fibers and the O'Neill tensor of the submersion respectively.
\end{defi}  

The interest of projectable Jacobi fields is that they arise from variations by horizontal geodesics. As such, if $Y$ is a projectable Jacobi field, $\pi_*Y$ is a Jacobi field along $\bal$ in the base. Conversely, we have the following 

\begin{lem}
Let $\bY$ be a Jacobi field of $B$ along $\bal$, and $v$ a vertical vector at $\al(0)$; then there is a unique projectable Jacobi field $Y$ along $\al$ such that 
$\pi_*Y=\bY$ and $Y(0)^v=v$.
\end{lem}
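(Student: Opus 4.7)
The plan is to prove existence by a direct variational construction and uniqueness by a dimension count on the space of projectable Jacobi fields. The whole argument rests on the characterization of projectable Jacobi fields as the variation fields of variations of $\al$ by horizontal geodesics.

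For \emph{existence}, I would first select a variation $\bar c(s,t)$ of $\bal$ by geodesics of $B$ whose variation field is $\bY$, i.e., $\bar c(0,t)=\bal(t)$, each $t\mapsto \bar c(s,t)$ is a geodesic, and $\partial_s \bar c(0,t)=\bY(t)$. Next, choose a smooth curve $\sigma:(-\eps,\eps)\to M$ with $\sigma(0)=\al(0)$, $\pi\circ\sigma=\bar c(\cdot,0)$, and $\sigma'(0)^v=v$; the horizontal component $\sigma'(0)^h$ is then forced to be the horizontal lift of $\bY(0)$. For each $s$, let $\widetilde V(s)$ denote the horizontal lift at $\sigma(s)$ of $\partial_t\bar c(s,0)$, and set
\[
c(s,t)=\exp_{\sigma(s)}\bigl(t\,\widetilde V(s)\bigr).
\]
Horizontal lifts of base geodesics are horizontal geodesics, so $\pi\circ c=\bar c$ and each $c(s,\cdot)$ is horizontal in $M$. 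Consequently $Y(t):=\partial_s c(0,t)$ is a Jacobi field along $\al$ arising from a horizontal geodesic variation, hence projectable; by construction $\pi_* Y=\bY$ and $Y(0)=\sigma'(0)$, so $Y(0)^v=v$.

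For \emph{uniqueness}, I would use a dimension count. The space of Jacobi fields along $\al$ has dimension $2(n+k)$. The projectability identity $Y'^v=-S_{\al'}Y^v-A_{\al'}Y^h$ imposed at $t=0$ is a system of $k$ linear equations on the pair $(Y(0),Y'(0))$, and since projectable Jacobi fields are precisely the variation fields of horizontal variations, this condition propagates along $\al$ and cuts out a linear subspace of dimension exactly $2n+k$. The evaluation map
\[
Y\ \longmapsto\ \bigl(\pi_*Y,\, Y(0)^v\bigr)
\]
is linear from this $(2n+k)$-dimensional space into a target (Jacobi fields on $\bal$ plus vertical vectors at $\al(0)$) of the same dimension $2n+k$; existence exhibits its surjectivity, so it is an isomorphism, yielding uniqueness. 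Alternatively one can argue directly: if $Y$ is projectable with $\pi_*Y=0$ and $Y(0)^v=0$, then $Y$ is vertical, and projectability collapses to $Y'^v=-S_{\al'}Y$, giving $Y'(0)^v=0$; the standard O'Neill identity for the horizontal part of the covariant derivative of a vertical field along a horizontal geodesic gives $Y'(0)^h=0$ as well, so $Y\equiv 0$ by uniqueness for second-order linear ODEs.

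The main obstacle is the claim that projectability propagates along $\al$, so that it suffices to check it at $t=0$ in order to count dimensions. This is not a feature of the Jacobi equation in isolation; it relies on the O'Neill formulas, which identify projectable Jacobi fields with variation fields of horizontal geodesic variations. Once this identification (recorded in \cite[\S 1.6]{GrW}) is invoked, the rest of the argument is essentially linear algebra together with the variational construction above.
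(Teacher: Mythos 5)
The paper states this lemma without proof, deferring to \cite{ON} and \cite[section 1.6]{GrW}; your argument is essentially the standard one from those references, and it is correct. The existence construction (lift the base variation geodesic by geodesic, using a lift $\sigma$ of the transversal curve with prescribed vertical velocity $v$, and the fact that a geodesic of $M$ with horizontal initial velocity stays horizontal and projects to a base geodesic) is exactly how one produces projectable fields with prescribed projection and vertical initial value.

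One remark on the step you single out as ``the main obstacle'': the propagation of the projectability identity along $\al$ is not actually needed for your dimension count. Writing $P$ for the space of projectable Jacobi fields, the injection $Y\mapsto (Y(0),Y'(0))$ sends $P$ into the subspace of initial conditions cut out by the identity $Y'(0)^v=-S_{\al'}Y(0)^v-A_{\al'}Y(0)^h$ \emph{at the single time} $t=0$, which every projectable field satisfies by definition; this alone gives $\dim P\leq 2(n+k)-k=2n+k$. Combined with the surjectivity of $Y\mapsto(\pi_*Y,Y(0)^v)$ onto a $(2n+k)$-dimensional target, which your existence construction provides, the map is an isomorphism and uniqueness follows --- no appeal to propagation is required. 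Your alternative direct argument is also complete and self-contained: a projectable $Y$ with $\pi_*Y=0$ is vertical, so $Y(0)=Y(0)^v=0$, the projectability identity gives $Y'(0)^v=0$, and the tensorial identity $(Y')^h=A^*_{\al'}Y$ for vertical fields along a horizontal geodesic gives $Y'(0)^h=0$, whence $Y\equiv 0$ by uniqueness for the Jacobi ODE. Either route closes the proof.
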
  
  
  A particular case of projectable Jacobi fields arises from taking geodesic variations obtained from lifting a geodesic in the base; such fields are called \emph{holonomy Jacobi fields}, and they satisfy the stronger condition
  \[
  J'=-A_{\al'}^*J-S_{\al'}J
  \]
It is clear that they agree with those projectable Jacobi fields mapping to the zero field under $\pi_*$.

  \begin{proof}[Proof of Theorem \ref{conj}]
 Let $\al:\R\to M$ be a horizontal geodesic and $\bal=\pi\circ \al$ its projection by the submersion. The Lagrangian subspace $\bL_o$ can be lifted to $\al$ by considering the subspace spanned by projectable Jacobi fields $Y$ that vanish at $t=0$ (and will therefore have horizontal $Y'(0)$), and by holonomy Jacobi fields along $\al$. We use $L$ to denote such Lagrangian, and $W$ to denote the subspace generated by the holonomy fields. It is interesting to observe that $L$ agrees with the $L^N$ from the proof of Theorem \ref{thm:submanifold} when $N$ is the fiber through $\al(0)$.
 By Lemma 3.1 in \cite{Lyt}, 
\begin{equation}\label{eq:sum of indexes}
 \ind_{L/W}+\ind_W=\ind_L
\end{equation} 
along any interval, where in $L/W$ we are using the transverse Jacobi equation induced by $W\subset L$. Observe that since holonomy Jacobi fields never vanish, $\ind_W=0$ over any interval. We claim that 
$  \ind_{L/W}=\ind_{\bL_0}$. To prove it, we use that, as stated in \cite[section 3.2]{Lyt}, the transverse Jacobi equation corresponding to $W$ along $\al$ agrees with the usual Jacobi equation along $\bal$. Since Lagrangians for the Jacobi equation project to Lagrangians for the transverse Jacobi equation, and every field $Y$ in $L$ satisfies $Y(0)\in W(0)$, we have the mentioned equivalence of indices. Thus we have
$\ind_{\bL_0}=\ind_L$.

 We will estimate this common value over the intervals $[0,r\pi]$ using some of the previous inequalities on the index; choose an arbitrary $c<c_0$ where $c_0$ is the conjugate radius of $B$:
 
 {\setlength\arraycolsep{0.3em}
 \begin{eqnarray}
 \ind_{\bL_0}[0,r\pi] &=& \ind_{\bL_0}\left[0,\frac{r\pi}{c}\cdot c\right]\leq \left(\left[\frac{r\pi}{c}\right]+1\right)(n-1) \\
 \ind_L[0,r\pi] &\geq& r(n-1+k)+\ind_L(0)=r(n-1+k)+(n-1)
 \end{eqnarray}
 }
by propositions 
\ref{upper bound for Lagrangians - conj-radius}
and \ref{lower Lagrangian bound in long intervals} respectively.

	To finish the proof, divide both inequalities by $r$ and make it tend to infinity to conclude that 
	\[
	k\leq \left(\frac{\pi}{c}-1\right)(n-1).
	\]
Letting $c$ tend to $c_0$ gives us Theorem \ref{conj}. 
  \end{proof}
  
The above proof can be easily extended to metric foliations. In order to do this, we define the \emph{focal radius of a metric foliation $\calF$}, $\foc(\calF)$, as the infimum over all the leaves of $\calF$ of the focal radius of each leaf. i.e, the minimum distance to $N$ at which its first focal point appears.

\begin{proof}[Proof of Corollary \ref{cor:foliation focal radius}]
Let $F$ be a leaf of $\calF$ and $\al:\R\to M$ a geodesic orthogonal to $F$ with $\al(0)\in F$. Denote by $W$ the set of holonomy Jacobi fields along $\al$, and by $L$ the Lagrangian spanned by $W$ and those Jacobi fields along $\al$ with $J(0)=0$, $J'(0)\perp T_{\al(0)}F$. Since $\ind_W I=0$, equation \ref{eq:sum of indexes} gives 
\[
\ind_L I=\ind_{L/W} I
\]
for any interval $I$. Observe that $L/W$ corresponds to the Lagrangian $\bL_0=\{ J\,:\, J(0)=0\,\}$ of Jacobi fields for Wilking's transverse equation for the isotropic $W$.

From the definition of the focal radius of $\calF$ it follows that for every $c<\foc(\calF)$, 
\[
\ind_{\bL_0}(0,c]=0,
\]
and therefore we are in the situation of Proposition \ref{prop:span_arbitrary_Lagrangian}, thus
 \[
 r(n-1+k)\leq \ind_L[0,r\pi]=\ind_{\bL_0}[0,r\pi]\leq
 \left(\left[\frac{r\pi}{c}\right]+1\right)\left(n-1\right)
 \]
 for any integer $r>0$. As before, divide both sides by $r$ and let it tend to zero to obtain the inequality claimed in the corollary.
\end{proof}  
  
\section{Proof of Theorem \ref{thm:closed geodesic bounds}} \label{sec:Proof theorem B}

Let $m$ be the smallest positive integer with $\pi_iB=0$ when $i=1,\dots,m-1$ and $\pi_mB\neq 0$. Hurewicz's theorem implies that $m\leq n$. If $\Lambda B$ denotes the free loop space of $B$, then 
$\pi_{m-1}\Lambda B=\pi_mB$, and Lyusternik-Schnirelmann theory implies that there is a closed geodesic  $\bal:[0,\ell]\to B$ such that the number of conjugate points to $\bal(0)$ along $\bal$ in the interval $(0,\ell)$ does not exceed $m-1$ (see \cite[Theorem 1.3]{BTZ}).   
Denote by $\al:\R\to M$ some horizontal lift of $\bal$ to $M$. 

Choose along $\al$ the Lagrangian $L$ of Jacobi fields spanned by the vertical holonomy Jacobi fields and projectable Jacobi fields that vanish at $t=0$. 
As in the proof of Theorem \ref{conj}, we have

\begin{equation}\label{eq:transverse_holonomy_ineq_for_index}
\ind_{L}I= \ind_{\bL_0}I.
\end{equation}

We are going to use this equality in intervals of the form $[0,r\pi]$ for $r$ a positive integer; 
the left hand side in \eqref{eq:transverse_holonomy_ineq_for_index} can be bound with  the help of Proposition \ref{lower Lagrangian bound in long intervals}, giving 
\[
r(n-1+k)+\ind_{L}(0)\leq \ind_{L}[0,r\pi];
\]
on the other hand the right hand side can be bound with Proposition \ref{prop:upper bound closed geodesic} to get
\[
\ind_{\bL_0}[0,r\pi]\leq \left(\left[\frac{r\pi}{\ell}\right]+1\right)\left(n-1+\ind_{\bL_0}[0,\ell)\right). 
\]
Dividing by $r$ and letting it tend to infinity gives
\[
n-1+k\leq \frac{\pi}{\ell}\left(n-1+\ind_{\bL_0}[0,\ell)\right). 
\]
But from the bound on the number of conjugate points of $\bal$ in $[0,\ell)$ we get that 
\[
n-1+k\leq \frac{3\pi}{\ell}\left(n-1\right)\leq  
\frac{3\pi}{\ell_0}(n-1),
\]
which proves Theorem \ref{thm:closed geodesic bounds}.
\qed


\begin{thebibliography}{10}
\bibitem{AK} Amman, M. and Kennard, L., \emph{Positive curvature and rational ellipticity}, preprint, 
\texttt{http://arxiv.org/abs/1403.1440}

\bibitem{BTZ} Ballmann, W.; Thorbergsson, G.; Ziller, W., \emph{Some existence theorems for closed geodesics}, Comment. Math. Helv. 58 (1983), no. 3, 416--432.

\bibitem{dC} do Carmo, M.,\emph{Riemannian geometry},  Mathematics: Theory and Applications. Birkh\"{a}user Boston, Inc., Boston, MA, (1992).

\bibitem{FL} Fet, A. I., Lyusternik, L. A.,  \emph{Variational problems on closed manifolds}, (Russian) Doklady Akad. Nauk SSSR (N.S.) 81, (1951). 17--18.

\bibitem{GrW} Gromoll, D., Walschap, G., \emph{Metric Foliations and Curvature}, Progr. in Math., 268, Birkhauser Verlag (2009).

\bibitem{He} Helfer, A. D., \emph{Conjugate points on spacelike geodesics or pseudo-selfadjoint Morse-Sturm-Liouville systems}, Pacific J. Math. 164 (1994), no. 2, 321--350.

\bibitem{BG} Jim\'enez, W., \emph{Riemannian Submersions And Lie Groups}, thesis UMD (2006), available at
\texttt{http://hdl.handle.net/1903/2648}. 

\bibitem{KT} Kim, H. and Tondeur, P., \emph{Riemannian foliations on manifolds with nonnegative curvature}, 
Manuscripta Math. 74 (1992), no. 1, 39--45. 

\bibitem{Lyt} Lytchak, A., \emph{Notes on the Jacobi equation}, Differential Geom. Appl. 27 (2009), no. 2, 329--334. 

\bibitem{ON} O'Neill, B., \emph{Submersions and geodesics}, Duke Math. J. 34 (1967), 363--373.

\bibitem{PT} Piccione, P.; Tausk, D.,
\emph{On the distribution of conjugate points along semi-Riemannian geodesics},
Comm. Anal. Geom. 11 (2003), no. 1, 33--48. 

\bibitem{Sak} Sakai, T., \emph{Riemannian geometry}, Translations of Mathematical Monographs, 149. American Mathematical Society, Providence, RI, (1996).


\bibitem{VerZil} Verdiani, L., Ziller, W., \emph{Concavity and rigidity in non-negative curvature}, J. Diff. Geom. 97 (2014), 349--375.

\bibitem{Wals} 
Walschap, G.,  \emph{Metric foliations and curvature},  J. Geom. Anal. 2 (1992), no. 4, 373--381. 

\bibitem{Wil} Wilking, Burkhard; \emph{A duality theorem for Riemannian foliations in nonnegative sectional curvature}, Geom. Funct. Anal. 17 (2007), no. 4, 1297--1320.






\end{thebibliography}
\end{document}